\numberwithin{equation}{section}
\theoremstyle{plain}
\newtheorem{thm}{Theorem}[section]
\newtheorem*{mainthm}{Main Theorem}
\newtheorem{lem}[thm]{Lemma}
\newtheorem{prop}[thm]{Proposition}
\newtheorem{introbs}{Observation}
\theoremstyle{definition}
\theoremstyle{remark}
\newcommand{\N}{\mathbb{N}}
\newcommand{\Z}{\mathbb{Z}}
\newcommand{\optionalsubsuper}[3]{%
\ifthenelse{\equal{#2}{}}{%
  \ifthenelse{\equal{#3}{}}{%
    #1}{%
    #1^{#3}}
  }{%
  \ifthenelse{\equal{#3}{}}{%
    #1_{#2}}{%
    #1_{#2}^{#3}}}}
\newcommand{\Union}{\bigcup}
\newcommand{\intersect}{\cap}
\newcommand{\defeq}{\mathrel{\mathop{:}}=}
\newcommand{\Res}[2]{\operatorname{Res}^{#1}_{#2}}
\newcommand{\Ind}[2]{\operatorname{Ind}_{#1}^{#2}}
\newcommand{\Family}{\mathfrak{F}}
\newcommandx{\OrbitCat}[2][1=\Group,2=\Family]%
   {\optionalsubsuper{\mathcal{O}}{#2}{}#1}
\newcommand{\BredonZ}{\underline{\Z}}
\newcommand{\dash}{-}
\newcommand{\PlaceA}{\dash}
\newcommand{\FP}[1]{\ensuremath{\mathrm{FP}_{#1}}}
\newcommand{\Tor}{\operatorname{Tor}}
\newcommand{\BredonHoml}[1]{H_{#1}^\Family}
\newcommand{\Homl}[1]{\underline{H\!}_{\,#1}}
\newcommand{\RHoml}[1]{\underline{\tilde{H}\!}_{\,#1}}
\newcommand{\Skeleton}[2]{#1^{(#2)}}
\newcommand{\isom}{\cong}
\newcommand{\Group}{\Gamma}
\newcommand{\Subgroup}{\Lambda}
\newcommand{\AnotherSubgroup}{\Xi}
\newcommand{\VC}{\mathcal{VC}}
\newcommand{\FIN}{\mathcal{FIN}}
\newcommand{\Ab}{\mathfrak{Ab}}
\renewcommand{\:}{\colon}
\DeclareMathOperator{\BredonRCat}{Mod-\OrbitCat[\Group][\Family]}
\DeclareMathOperator{\BredonLCat}{\OrbitCat-Mod}
\newcommand{\BredonC}[1]{\underline{C\!}_{\,#1}}
\newcommand{\epi}{\twoheadrightarrow}
\newcommandx{\BredonFP}[2][1=\Family]{\ensuremath{#1\textrm{-}\mathrm{FP}_{#2}}}
\DeclareMathOperator{\pt}{pt.}
\DeclareMathOperator{\mor}{mor}
\title{Brown's criterion in Bredon homology}
\author{Martin Fluch \and Stefan Witzel}
\begin{document}

\maketitle

\begin{abstract}
We translate Brown's criterion for homological finiteness properties to the setting of Bredon homology.
\end{abstract}

\thispagestyle{empty}


Bredon cohomology has become an important algebraic tool for studying
classifying spaces~$E_{\Family}\Group$ of discrete groups $\Group$
with stabilisers in a given family $\Family$ of subgroups of~$\Group$.
It has been defined for finite groups by Bredon~\cite{bredon-67} and
the definition has been extended to arbitrary groups and families of
subgroups by L\"{u}ck~\cite{luck-89}.
The basic idea in passing from classical cohomology to Bredon cohomology
is to replace $\Group$, regarded as a small category, by the orbit category
$\OrbitCat$.

More precisely, let $\Group$ be a discrete group.  By a \emph{family}
of subgroups of $\Group$ we mean a non-empty set $\Family$ of
subgroups of $\Group$ which is closed under conjugation.  The
transitive $\Group$-sets $\Group/\Subgroup$ with $\Subgroup\in \Family$
and $\Group$-maps between them form the \emph{orbit category} $\OrbitCat$.
A \emph{right (left) Bredon module over $\OrbitCat$} is a
contravariant (covariant) functor from $\OrbitCat$ to the category
$\Ab$ of abelian groups.  A morphism of Bredon modules of the same
variance is a natural transformation.  Right (left) Bredon modules and
the morphisms between them form a category which is denoted by
$\BredonRCat$ ($\BredonLCat$).

$\BredonRCat$ and
$\BredonLCat$ are functor categories and thefore they inherit many
properties from the category~$\Ab$.  Among others they are abelian
categories in which all small
limits and colimits exist, they have enough projectives and there
exists  a notion of
being finitely generated.  Details are given in
Section~\ref{sec:Bredon-basics} below. 

Let $n\in \N\cup \{\infty\}$.  An $\OrbitCat$-module $M$ is said to be
of \emph{type $\BredonFP{n}$} if there exists a resolution
\begin{equation*}
    \ldots\to P_{2}\to P_{1}\to P_{0}\to M \to 0
\end{equation*}
of $M$ by projective $\OrbitCat$-modules such that $P_{k}$ is finitely
generated for every $k\leq n$.  The trivial $\OrbitCat$-module
$\BredonZ$ maps every object of $\OrbitCat$ to $\Z$ and every morphism
of $\OrbitCat$ to the identity.  A group $\Group$ is said to be of
\emph{type $\BredonFP{n}$} if $\BredonZ$ is of type $\FP{n}$ as a
right $\OrbitCat$-module.

In the special case that $\Family = \{1\}$ consists only of the
trivial group Bredon cohomology reduces to classical
cohomology of groups.  The finiteness properties $\FP{n}$ in this case have been
extensively studied. The classical proofs sometimes also carry over to the
case where $\Family=\FIN$ is the family of finite subgroups. This is true for example of hyperbolic groups, arithmetic groups, mapping class groups, and outer automorphism groups of finitely generated free groups, see \cite[Sections~4.7,4.8]{luck-05} (and trivially for torsion free groups). On the other hand Leary and Nucinkis \cite{leary-03}
showed how much finiteness properties with respect to $\FIN$ can differ from the classical ones.

The next family of interest is the family $\VC$ of virtually cyclic subgroups.
One result here is that for an elementary amenable group
being of type \BredonFP[\VC]{\infty} is
equivalent being virtually cyclic~\cite{kochloukova-11}.

In the classical setting Brown's Criterion \cite[Theorem~2.2]{brown-87} has
been fruitful in the study of the properties $\FP{n}$. Our main result is a
translation of this criterion to the Bredon setting. In order to state 
it, some more definitions are needed.

\medskip

A \emph{$\Group$-CW-complex} $X$ is a CW-complex on which $\Group$
acts by cell-permuting homeomorphisms such that the stabilizer of a
cell fixes that cell pointwise. We let $\BredonC{*}(X)$ denote the Bredon
cellular chain complex of $X$, cf.~\cite[p.~11]{mislin-03}. The
\emph{Bredon homology modules} $\Homl{*}(X)$ of $X$ are defined to be
the homology modules of the Bredon chain complex $\BredonC{*}(X)$.
Evaluated at $\Group/\Subgroup\in 
\OrbitCat$ these Bredon modules give
\begin{equation*}
    \Homl{*}(X)(\Group/\Subgroup) = H_{*}(X^{\Subgroup})
\end{equation*}
where the right hand side is the ordinary homology of the fixed point
complex $X^{\Subgroup}$.  This definition is functorial.  Analogously
to the classical case we define the \emph{reduced Bredon homology
modules} $\RHoml{*}(X)$ to be the kernel of the morphism
$\Homl{*}(X)\to \Homl{*}(\pt)$ which is induced by the map from $X$ to
the singleton space.  We say that $X$ is \emph{$\Family$-acyclic} up to
dimension~$n$ if $\RHoml{k}(X)=0$ for every $k\leq n$.  Note
that being $\Family$-acyclic up to dimension~$-1$ is equivalent to
the condition that $X^{\Subgroup}\neq \emptyset$ for every
$\Subgroup\in \Family$.

The following is completely analogous to Brown's original
article~\cite{brown-87}:
Let $n\in \N$.  A $\Group$-CW-complex $X$
is said to be \emph{$\Family$-$n$-good} if the following two 
conditions hold:
\begin{enumerate}
    \item $X$ is $\Family$-acyclic up to dimension~$n-1$, and 
    
    \item
    for every $p$-cell $\sigma$ of $X$, $p\leq n$, $\Family \intersect
    \Group_\sigma\subset \Family$ and the stabiliser $\Group_\sigma$
    of $\sigma$ is of type \BredonFP[(\Family \intersect
    \Group_\sigma)]{n-p}.
\end{enumerate}
A filtration $(X_{\alpha})_{\alpha\in I}$ of a
$\Gamma$-CW-complex $X$ by $\Gamma$-invariant subcomplexes is said to be
of \emph{finite $n$-type} if the
$n$-skeleta $X_{\alpha}^{(n)}$ are cocompact for all $\alpha\in I$.

A directed system of Bredon modules $(M_{\alpha})_{\alpha\in I}$ is
said to be \emph{essentially trivial} if for every $\alpha\in I$ there
exists $\beta\geq \alpha$ such that the homomorphism $M_{\alpha}\to
M_{\beta}$ is trivial.

\medskip

\begin{mainthm}
    Let $\Group$ be a group and $\Family$ a family of subgroups of
    $\Group$.  Let $X$ be an $\Family$-$n$-good $\Group$-CW-complex
    and let $(X_{\alpha})_{\alpha\in I}$ be a filtration by
    $\Group$-invariant subcomplexes of finite $n$-type.

    Then $\Group$ is of type $\BredonFP{n}$ if and only if the
    directed system $(\RHoml{k}(X_{\alpha}))_{\alpha\in I}$ of reduced
    Bredon homology modules is essentially trivial for all $k<n$.
\end{mainthm}

%

The importance of a directed system being essentially trivial stems
from the following fact, which is the analogue of
\cite[Lemma~2.1]{brown-87}.

\begin{introbs}
    \label{obs:essentially_trivial}
    A directed system $(M_\alpha)_{\alpha \in I}$ of
    $\OrbitCat$-modules is essentially trivial if and only if
    \begin{equation*}
	\varinjlim_{\alpha} \prod_{\Subgroup \in
	\Family}\prod_{J_\Subgroup} M_\alpha(\Group/\Subgroup) = 0
    \end{equation*}
    for every family of cardinals $(J_\Subgroup)_{\Subgroup \in
    \Family}$.
\end{introbs}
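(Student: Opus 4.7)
The plan is to use the standard characterization of zero in a directed colimit of abelian groups: a representative $\xi \in A_{\alpha}$ has zero image in $\varinjlim_{\alpha} A_{\alpha}$ if and only if there exists $\beta \geq \alpha$ such that the structure map $A_{\alpha} \to A_{\beta}$ sends $\xi$ to $0$. Together with the fact that a morphism of Bredon modules is zero if and only if it is zero when evaluated at every object $\Group/\Subgroup$ of $\OrbitCat$, this reduces both directions to an argument in $\Ab$ applied coordinatewise.

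For the forward direction, assume $(M_{\alpha})$ is essentially trivial. Fix a family $(J_{\Subgroup})_{\Subgroup \in \Family}$ and let $\xi \in \prod_{\Subgroup}\prod_{J_{\Subgroup}} M_{\alpha}(\Group/\Subgroup)$ represent an element of the colimit. Choose $\beta \geq \alpha$ with $M_{\alpha} \to M_{\beta}$ equal to the zero morphism. Evaluating at each $\Group/\Subgroup$ gives zero on every coordinate of $\xi$, so $\xi$ dies at stage $\beta$, and the colimit vanishes.

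For the reverse direction, the key trick is to test the hypothesis on the \emph{diagonal} index set $J_{\Subgroup} \defeq M_{\alpha}(\Group/\Subgroup)$ (viewed as a set indexing itself). Form the universal element $\xi \in \prod_{\Subgroup \in \Family} \prod_{m \in M_{\alpha}(\Group/\Subgroup)} M_{\alpha}(\Group/\Subgroup)$ whose $(\Subgroup,m)$-coordinate is $m$ itself. By assumption, $\xi$ represents the zero element of the colimit, so there is some $\beta \geq \alpha$ at which its image vanishes. Unpacking, this says that the structure map $M_{\alpha}(\Group/\Subgroup) \to M_{\beta}(\Group/\Subgroup)$ kills every $m \in M_{\alpha}(\Group/\Subgroup)$ for every $\Subgroup \in \Family$ simultaneously, which is precisely the statement that $M_{\alpha} \to M_{\beta}$ is zero as a morphism in $\BredonRCat$.

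The only point that deserves care is the \emph{simultaneity} over all $\Subgroup \in \Family$: one could worry that different subgroups might require different stages $\beta_{\Subgroup}$, but this is avoided because the hypothesis packages all $\Subgroup$ into a single product that lives at a single stage $\alpha$ of the directed system, and a single stage $\beta$ therefore suffices to kill the whole tuple. This is the reason the product is taken over all $\Subgroup \in \Family$ at once rather than treated one subgroup at a time.
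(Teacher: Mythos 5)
Your proof is correct and is essentially the standard argument of Brown's Lemma~2.1 from \cite{brown-87} (which the paper cites without reproducing) transported to the Bredon setting: the forward direction is direct, and the reverse direction rests on the diagonal trick of taking $J_{\Subgroup}$ to be (the cardinality of) $M_{\alpha}(\Group/\Subgroup)$, so that vanishing of the ``universal element'' in the filtered colimit forces the structure map $M_{\alpha}\to M_{\beta}$ to vanish at all objects $\Group/\Subgroup$ simultaneously. Your closing remark on simultaneity correctly identifies why the product over all $\Subgroup\in\Family$ must be taken inside a single stage of the colimit rather than subgroup by subgroup.
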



\medskip

We would like to thank Kai-Uwe Bux for suggesting to work on this result and discussing intermediate versions with us. We also gratefully acknowledge support through the SFB 701 in Bielefeld (both authors) and the SFB 878 in Münster (second author).

\section{Basic definitions and results on Bredon modules}

\label{sec:Bredon-basics}

This section is to collect basic definitions and facts related to
Bredon modules for further reference.  Unless stated otherwise the
results can be found in~\cite[pp.~162--169]{luck-89}
or~\cite[p.~7--27]{mislin-03}.  By a Bredon module we mean either a
left or a right Bredon module unless the variance is explicitly
mentioned.

Since the category of Bredon modules is a functor category it follows
that limits and colimits of Bredon modules are calculated component
wise.  In particular a sequence of Bredon modules $0\to M' \to M \to
M'' \to 0$ is exact if and only if the corresponding sequence
\begin{equation*}
    0\to M'(\Group/\Subgroup) \to M(\Group/\Subgroup) \to 
    M''(\Group/\Subgroup) \to 0
\end{equation*}
of abelian groups is exact for every $\Group/\Subgroup\in \OrbitCat$. 

For subgroups $\AnotherSubgroup$ and $\Subgroup$ of $\Group$ we denote
by $[\Group/\AnotherSubgroup, \Group/\Subgroup]_{\Group}$ the set of
all $\Group$-maps $\Group/\AnotherSubgroup \to \Group/\Subgroup$.
For a fixed subgroup $\Subgroup$ of $\Group$ we denote by
$\Z[\PlaceA,\Group/\Subgroup]_{\Group}$ the right $\OrbitCat$-module
which sends $\Group/\AnotherSubgroup\in \OrbitCat$ to the free abelian
group $\Z[\Group/\AnotherSubgroup, \Group/\Subgroup]_{\Group}$ on the
basis $[\Group/\AnotherSubgroup, \Group/\Subgroup]_{\Group}$.
The left $\OrbitCat$-module $\Z[\Group/\Subgroup]_{\Group}$ is defined
analogously.
The \emph{free objects} in $\BredonRCat$ are now precisely the direct sums
of $\Z[\PlaceA, \Group/\Subgroup]_{\Group}$ with $\Subgroup\in
\Family$.  Likewise the \emph{free objects} in $\BredonLCat$ are the
direct sums of the Bredon modules $\Z[\Group/\Subgroup,
\PlaceA]_{\Group}$ with $\Subgroup\in \Family$. In either case a free Bredon
module is \emph{finitely generated} if the direct sum can be taken to be finite.
An arbitrary Bredon module is finitely generated if it is the surjective image
of a finitely generated free module.

For any two Bredon modules $M$ and $N$ of the same variance the set of
morphisms between them is denoted by $\mor_{\Family}(M,N)$.  A Bredon
module $P$ is \emph{projective} if the functor $\mor_{\Family}(P,
\PlaceA)$ is exact.  This is the case if and only if $P$ is a direct
sumand of a free Bredon module.

The categorical tensor product~\cite[p.~45]{schubert-70} gives rise to
a \emph{tensor product over $\Family$}.  It assigns to a right
$\OrbitCat$-module $N$ and left $\OrbitCat$-module $M$ an abelian
group $N\otimes_{\Family}M$.  The $\OrbitCat$-module $N$ is said to be
\emph{flat} if the functor $N\otimes_{\Family}\PlaceA$ is exact. Every
projective Bredon module is flat.

There exists also the \emph{tensor product over $\Z$}.  For two Bredon
modules $M$ and $N$ of the same variance it is defined to be the
Bredon module $M\otimes N$, which evaluated at any
$\Group/\Subgroup\in \OrbitCat$ is given by $(M\otimes
N)(\Group/\Subgroup) = M(\Group/\Subgroup) \otimes
N(\Group/\Subgroup)$.

\begin{lem}
    \label{lem:yoneda-iso}
    Let $N$ be a right $\OrbitCat$-module and $\Subgroup \in \Family$.  
    Then there exists an isomorphism
    \begin{equation*}
	N \otimes_\Family \Z[\Group/\Subgroup,\PlaceA]_\Group \isom
	N(\Group/\Subgroup)
    \end{equation*}
    which is natural in $N$.
\end{lem}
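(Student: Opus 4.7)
The plan is to prove this by the standard Yoneda calculation, adapted to the tensor product over the orbit category. Recall that $N \otimes_\Family M$ is the coend construction, which concretely can be described as
\begin{equation*}
    \bigoplus_{\AnotherSubgroup \in \Family} N(\Group/\AnotherSubgroup) \otimes_\Z M(\Group/\AnotherSubgroup) \; \big/ \; R,
\end{equation*}
where $R$ is the subgroup generated by differences $N(g)(n) \otimes m - n \otimes M(g)(m)$ for every $\Group$-map $g\colon \Group/\AnotherSubgroup \to \Group/\AnotherSubgroup'$, every $n \in N(\Group/\AnotherSubgroup')$ and every $m \in M(\Group/\AnotherSubgroup)$. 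When $M = \Z[\Group/\Subgroup, \PlaceA]_\Group$, the elements $m$ may be taken to be basis $\Group$-maps $f \colon \Group/\Subgroup \to \Group/\AnotherSubgroup$, and $M(g)(f) = g \circ f$.

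First I would define the candidate isomorphism $\phi\colon N \otimes_\Family \Z[\Group/\Subgroup,\PlaceA]_\Group \to N(\Group/\Subgroup)$ on generators by the formula $n \otimes f \mapsto N(f)(n)$. To see that this is well defined on the quotient, one checks that for $g\colon \Group/\AnotherSubgroup \to \Group/\AnotherSubgroup'$, $n \in N(\Group/\AnotherSubgroup')$ and $f\colon \Group/\Subgroup \to \Group/\AnotherSubgroup$, both sides of the defining relation are mapped to $N(g \circ f)(n)$ by functoriality of $N$. Next I would define $\psi\colon N(\Group/\Subgroup) \to N \otimes_\Family \Z[\Group/\Subgroup,\PlaceA]_\Group$ by $n \mapsto n \otimes \id_{\Group/\Subgroup}$, which is manifestly well defined and $\Z$-linear.

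The two verifications that remain are $\phi \circ \psi = \id$ and $\psi \circ \phi = \id$. The first is immediate: $\phi(n \otimes \id_{\Group/\Subgroup}) = N(\id)(n) = n$. For the second, take a generator $n \otimes f$ with $n \in N(\Group/\AnotherSubgroup)$ and $f\colon \Group/\Subgroup \to \Group/\AnotherSubgroup$. Then
\begin{equation*}
    \psi(\phi(n \otimes f)) = N(f)(n) \otimes \id_{\Group/\Subgroup},
\end{equation*}
which equals $n \otimes (f \circ \id_{\Group/\Subgroup}) = n \otimes f$ precisely by applying the coend relation to the morphism $f$. Naturality in $N$ is automatic from the definition of $\phi$, since a morphism $N \to N'$ commutes with the structure maps $N(f)$.

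The step I expect to require the most care is unpacking the coend relation with the correct variance so that $\phi$ is well defined; everything else amounts to applying the definitions. This is really just the Yoneda lemma for the functor category $\BredonRCat$, recast in tensor-product form, and no genuine obstacle arises.
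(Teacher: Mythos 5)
Your argument is correct and is precisely the ``Yoneda type argument'' the paper delegates to the reference \cite[p.~9]{mislin-03}: you unpack the coend defining $\otimes_\Family$, define $\phi(n \otimes f) = N(f)(n)$ and $\psi(n) = n \otimes \id_{\Group/\Subgroup}$, and verify they are mutually inverse and natural, with the variance handled correctly throughout. Nothing further is needed.
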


This statement follows from a Yoneda type argument, see for
example~\cite[p.~9]{mislin-03}.

\medskip

If $\Subgroup$ is a subgroup of $\Group$ such that $\Family \intersect
\Subgroup \defeq \{\AnotherSubgroup \intersect \Subgroup \mid
\AnotherSubgroup \in \Family\}\subset \Family$, then there exists a
functor
\begin{equation*}
    I_\Subgroup\colon \OrbitCat[\Subgroup][\Family\cap \Subgroup] \to
    \OrbitCat
\end{equation*}
which sends $\Subgroup/\AnotherSubgroup$ to $\Group/\AnotherSubgroup$
for every $\AnotherSubgroup\in \Family\cap\Subgroup$.  The induction
functor
\begin{equation*}
    \Ind{\Subgroup}{\Group} \colon \OrbitCat[\Subgroup][\Family\cap \Subgroup]
\to
    \OrbitCat
\end{equation*}
and the restriction functor
\begin{equation*}
    \Res{\Group}{\Subgroup} \colon 
    \OrbitCat
\to
\OrbitCat[\Subgroup][\Family\cap \Subgroup]
\end{equation*}
with respect to $I_\Subgroup$ are defined in~\cite[p.~166]{luck-89}.

\begin{lem}
    \label{lem:properties-induction}
    The functor $\Ind{\Subgroup}{\Group}$ preserves the properties of
    being finitely generated and being projective.  Furthermore, it is
    an exact functor and $\Ind{\Subgroup}{\Group} \BredonZ =
    \Z[\PlaceA,\Group/\Subgroup]_{\Group}$.
\end{lem}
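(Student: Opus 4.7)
The strategy is to exploit that $\Ind{\Subgroup}{\Group}$ is the left Kan extension of $I_\Subgroup$ on the relevant module categories, so that it is left adjoint to the restriction functor $\Res{\Group}{\Subgroup}$. The latter, being pointwise precomposition with $I_\Subgroup$, is evidently exact. Thus on general grounds $\Ind{\Subgroup}{\Group}$ preserves all colimits (in particular direct sums and cokernels) and, having an exact right adjoint, preserves projectives.

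To establish the identity $\Ind{\Subgroup}{\Group}\BredonZ = \Z[\PlaceA,\Group/\Subgroup]_\Group$, I would first observe that over $\OrbitCat[\Subgroup][\Family\cap\Subgroup]$ the trivial module $\BredonZ$ coincides with the representable module $\Z[\PlaceA,\Subgroup/\Subgroup]_\Subgroup$: since $\Subgroup/\Subgroup$ is a singleton, every $\Subgroup/\AnotherSubgroup$ admits exactly one $\Subgroup$-map to it, so this representable is constantly $\Z$ with identity transitions. A standard Yoneda computation (analogous to Lemma~\ref{lem:yoneda-iso}) then shows that left Kan extension along $I_\Subgroup$ carries $\Z[\PlaceA,\Subgroup/\AnotherSubgroup]_\Subgroup$ to $\Z[\PlaceA,\Group/\AnotherSubgroup]_\Group$; specialising $\AnotherSubgroup=\Subgroup$ yields the claim.

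Preservation of finite generation follows from the same computation on representables. Finitely generated free modules are finite direct sums of representables, which are sent to finite direct sums of representables, i.e.\ to finitely generated free modules. An arbitrary finitely generated $M$ is presented as a quotient of a finitely generated free module, and right exactness carries this presentation to a corresponding presentation of $\Ind{\Subgroup}{\Group} M$, which is therefore finitely generated. Preservation of projectivity is the adjunction fact noted above, but can equally be seen directly: projectives are direct summands of free modules, and $\Ind{\Subgroup}{\Group}$ commutes with direct sums and sends free to free.

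The main obstacle is exactness, or more precisely left exactness, since right exactness is already free. My plan here is to invoke the explicit description of induction from~\cite[p.~166]{luck-89}: for each $\Group/\AnotherSubgroup\in\OrbitCat$ one can decompose $(\Ind{\Subgroup}{\Group} N)(\Group/\AnotherSubgroup)$ as a direct sum of values of $N$ on orbits of the $\Subgroup$-action on $\Group/\AnotherSubgroup$. Since direct sums in $\Ab$ are exact and exactness of morphisms of Bredon modules is tested componentwise, this gives left exactness. Equivalently, one verifies that the left $\OrbitCat[\Subgroup][\Family\cap\Subgroup]$-module obtained by restricting $\Z[\PlaceB,\Group/\AnotherSubgroup]_\Group$ along $I_\Subgroup$ is a direct sum of representables, hence flat, so that exactness of $\Ind{\Subgroup}{\Group}$ reduces via Lemma~\ref{lem:yoneda-iso} to the exactness of evaluation.
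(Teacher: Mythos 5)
The paper does not supply a proof of this lemma at all; it simply cites L\"uck~(p.~169) for the preservation statements and exactness, and Symonds (Lemmas~2.7 and~2.9) for the identification of $\Ind{\Subgroup}{\Group}\BredonZ$. So your self-contained argument has no in-paper counterpart, and most of it is sound: the adjunction $\Ind{\Subgroup}{\Group}\dashv\Res{\Group}{\Subgroup}$ with $\Res{\Group}{\Subgroup}$ exact gives preservation of colimits and projectives, preservation of finite generation follows from the behaviour on representables together with right exactness, and reducing exactness to flatness of the restricted morphism modules is a reasonable way of organising L\"uck's explicit formula for induction.

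The step that is too quick is the identification $\Ind{\Subgroup}{\Group}\BredonZ = \Z[\PlaceA,\Group/\Subgroup]_\Group$. You observe (correctly) that $\BredonZ = \Z[\PlaceA,\Subgroup/\Subgroup]_\Subgroup$ as functors, and then invoke the co-Yoneda fact that left Kan extension along $I_\Subgroup$ carries representables to representables, specialising at $\AnotherSubgroup=\Subgroup$. But $\Subgroup/\Subgroup$ is an object of $\OrbitCat[\Subgroup][\Family\cap\Subgroup]$ only if $\Subgroup\in\Family\cap\Subgroup$, which under the standing hypothesis $\Family\cap\Subgroup\subset\Family$ is the same as $\Subgroup\in\Family$. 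In the paper's application (Proposition~\ref{prop:aux-2}) one takes $\Subgroup=\Group_\sigma$ a cell stabiliser, for which only $\Family\cap\Group_\sigma\subset\Family$ is assumed, not $\Group_\sigma\in\Family$. When $\Subgroup\notin\Family$ the module $\BredonZ$ is not representable over $\OrbitCat[\Subgroup][\Family\cap\Subgroup]$, so the Yoneda/Kan argument does not apply as stated; one must compute the coend directly and use $\Family\cap\Subgroup\subset\Family$ to show that every $\Group$-map $\Group/\AnotherSubgroup'\to\Group/\Subgroup$ with $\AnotherSubgroup'\in\Family$ factors through some $\Group/\AnotherSubgroup$ with $\AnotherSubgroup\in\Family\cap\Subgroup$. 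This is precisely what the cited Symonds lemma carries out, and the conclusion is correct, but ``a standard Yoneda computation'' skips the one case that actually needs work.
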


The first statement is from~\cite[p.~169]{luck-89}.  The second
statement is Lemma~2.9 and Lemma~2.7 in~\cite[p.~268]{symonds-05}.

\begin{lem}
    \label{lem:properties-restriction}
    The functor $\Res{\Group}{\Subgroup}$ is exact and preserves
    being projective.
\end{lem}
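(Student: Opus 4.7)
The plan is to exploit that $\Res{\Group}{\Subgroup}$ is, by its definition in~\cite[p.~166]{luck-89}, precomposition with the functor $I_\Subgroup$: for a right $\OrbitCat$-module $M$ one has $\Res{\Group}{\Subgroup}(M) = M \circ I_\Subgroup$, and analogously for left modules. Since kernels and cokernels of morphisms of Bredon modules are computed pointwise on the objects of the orbit category, precomposition with $I_\Subgroup$ preserves exact sequences. This gives exactness at once, and by the same token $\Res{\Group}{\Subgroup}$ preserves arbitrary direct sums and direct summands.

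For the preservation of projectives I would use this last point to reduce to showing that a single free generator $\Z[\PlaceA, \Group/K]_\Group$ with $K\in \Family$ restricts to a free Bredon module over $\OrbitCat[\Subgroup][\Family\cap\Subgroup]$. The key ingredient is the standard decomposition of the $\Group$-set $\Group/K$ into $\Subgroup$-orbits via double cosets,
\[
    \Group/K \cong \bigsqcup_{[g]} \Subgroup/(\Subgroup \cap gKg^{-1}),
\]
where $[g]$ runs over a set of representatives of $\Subgroup\backslash\Group/K$. The hypothesis $\Family \cap \Subgroup \subseteq \Family$ ensures that each stabiliser $\Subgroup \cap gKg^{-1}$ lies in $\Family \cap \Subgroup$. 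Combining this with the identification $[\Group/H, \Group/K]_\Group \cong (\Group/K)^H$ (sending $f \mapsto f(H)$) for every $H \in \Family \cap \Subgroup$ should yield a natural isomorphism
\[
    \Res{\Group}{\Subgroup}(\Z[\PlaceA, \Group/K]_\Group) \cong \bigoplus_{[g]} \Z\bigl[\PlaceA, \Subgroup/(\Subgroup \cap gKg^{-1})\bigr]_\Subgroup
\]
of right $\OrbitCat[\Subgroup][\Family\cap\Subgroup]$-modules; the right hand side is visibly free, which together with the previous reduction completes the argument.

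The main obstacle is purely bookkeeping: checking that the orbit decomposition is natural in the variable $\Subgroup/H$, and confirming that each summand on the right hand side is represented by an object of $\OrbitCat[\Subgroup][\Family\cap\Subgroup]$, which is precisely what the hypothesis $\Family\cap\Subgroup\subseteq\Family$ is used for. The left module version proceeds symmetrically, starting from the representable $\Z[\Group/K, \PlaceA]_\Group$ and decomposing $\Group/K$ as a left $\Subgroup$-set in the same manner.
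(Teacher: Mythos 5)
Your proof is correct. The paper itself gives no argument for this lemma but simply cites L\"uck~\cite[p.~169]{luck-89} for exactness and Mart{\'{\i}}nez-P{\'e}rez~\cite[Lemma~3.7]{martinez-perez-02} for preservation of projectives, so in a sense you are filling in exactly the argument the paper delegates. Your treatment of exactness (restriction is precomposition with $I_\Subgroup$; exactness of Bredon modules is tested object-wise) is the standard one. For projectivity, the reduction via direct summands to the free generators, followed by the Mackey-type double coset decomposition
\[
\Group/K \cong \coprod_{[g]\in \Subgroup\backslash\Group/K} \Subgroup/(\Subgroup\cap gKg^{-1}),
\]
is precisely the mechanism behind the cited Lemma~3.7. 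Two small points worth tightening: the identification $[\Group/H,\Group/K]_\Group \cong (\Group/K)^H$ should send a $\Group$-map $f$ to $f(eH)$ (the image of the trivial coset), not ``$f(H)$''; and you should spell out, as you partly do, that $gKg^{-1}\in\Family$ by closure under conjugation and hence $\Subgroup\cap gKg^{-1}\in\Family\cap\Subgroup$, so each summand $\Z[\PlaceA,\Subgroup/(\Subgroup\cap gKg^{-1})]_\Subgroup$ is a legitimate free generator of $\BredonRCat[\Subgroup][\Family\cap\Subgroup]$-modules. The hypothesis $\Family\cap\Subgroup\subseteq\Family$ is not actually what guarantees this last point (closure under conjugation already does); it is rather needed for $I_\Subgroup$ and hence $\Res{\Group}{\Subgroup}$ to be defined in the first place. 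With those clarifications the argument is complete.
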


For the first part of the this statement see~\cite[p.~169]{luck-89},
the remaining part is \cite[Lemma~3.7]{martinez-perez-02}.  We also
need the following special case of Proposition~3.5 in
\cite{martinez-perez-02}:

\begin{lem}
 \label{lem:MP02-Prop3.5}
  There exists a natural isomorphism
  \begin{equation*}
    (N\otimes \Z[\PlaceA,\Group/\Subgroup]_{\Group}) \otimes_\Family M
    \isom \Res{\Group}{\Subgroup} N \otimes_{\Family\cap\Subgroup}
    \Res{\Group}{\Subgroup} M
  \end{equation*}
  for any right $\OrbitCat$-module $N$ and any left $\OrbitCat$-module
  $M$.
\end{lem}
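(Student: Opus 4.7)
The plan is to verify the natural isomorphism by reducing to representable right modules $N$, decomposing both sides via double cosets, and matching summands.

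Both sides are right-exact functors of $N$ which commute with arbitrary direct sums: on the left this is immediate from the right-exactness of $\otimes_{\Family}$; on the right it follows from the exactness of $\Res{\Group}{\Subgroup}$ (Lemma~\ref{lem:properties-restriction}) together with the fact that, being given by precomposition with $I_\Subgroup$, this functor preserves all colimits. Since the representables $\Z[\PlaceA,\Group/\AnotherSubgroup']_{\Group}$ with $\AnotherSubgroup'\in\Family$ generate $\BredonRCat$, I may assume throughout that $N=\Z[\PlaceA,\Group/\AnotherSubgroup']_{\Group}$.

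For such $N$, the right module $N\otimes\Z[\PlaceA,\Group/\Subgroup]_{\Group}$ sends $\Group/\Delta$ to the free abelian group on $\Group$-maps $\Group/\Delta\to\Group/\AnotherSubgroup'\times\Group/\Subgroup$. Decomposing the $\Group$-set $\Group/\AnotherSubgroup'\times\Group/\Subgroup$ into orbits, one per double coset $\AnotherSubgroup' g\Subgroup$ and each of the form $\Group/(\AnotherSubgroup'\cap g\Subgroup g^{-1})$, yields an identification of $N\otimes\Z[\PlaceA,\Group/\Subgroup]_{\Group}$ with $\bigoplus_{g}\Z[\PlaceA,\Group/(\AnotherSubgroup'\cap g\Subgroup g^{-1})]_{\Group}$. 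Since $\Family\cap\Subgroup\subset\Family$ and $\Family$ is closed under conjugation, each such stabiliser is conjugate to a subgroup in $\Family$, so the summands are honest representables, and Lemma~\ref{lem:yoneda-iso} identifies the left-hand side with $\bigoplus_{g\in\AnotherSubgroup'\backslash\Group/\Subgroup}M(\Group/(\AnotherSubgroup'\cap g\Subgroup g^{-1}))$. Dually, the $\Subgroup$-orbit decomposition of $\Group/\AnotherSubgroup'$, indexed by double cosets $\Subgroup h\AnotherSubgroup'$, yields $\Res{\Group}{\Subgroup}N\isom\bigoplus_{h}\Z[\PlaceA,\Subgroup/(\Subgroup\cap h\AnotherSubgroup' h^{-1})]_{\Subgroup}$, and Lemma~\ref{lem:yoneda-iso} inside $\OrbitCat[\Subgroup][\Family\cap\Subgroup]$ rewrites the right-hand side as $\bigoplus_{h\in\Subgroup\backslash\Group/\AnotherSubgroup'}M(\Group/(\Subgroup\cap h\AnotherSubgroup' h^{-1}))$.

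The two sums are matched via the inversion bijection $g\leftrightarrow g^{-1}$ between $\AnotherSubgroup'\backslash\Group/\Subgroup$ and $\Subgroup\backslash\Group/\AnotherSubgroup'$: for corresponding indices, the stabilisers $\AnotherSubgroup'\cap g\Subgroup g^{-1}$ and $\Subgroup\cap g^{-1}\AnotherSubgroup' g$ are conjugate in $\Group$, and the induced $\Group$-isomorphism of quotients gives an isomorphism of the corresponding $M$-values. I expect the main obstacle to be the naturality check: verifying that this term-by-term matching is independent of the choice of representatives and assembles into a single natural transformation in both $N$ and $M$, which requires tracking the action of the various coset and conjugation maps carefully, in the spirit of standard Mackey-type bookkeeping.
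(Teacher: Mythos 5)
The paper does not prove this lemma itself; it is stated only as a citation, a special case of Proposition~3.5 of \cite{martinez-perez-02}, so there is no internal proof to compare your argument against. Taken on its own, your double-coset computation is mathematically sound: the Mackey decomposition of $\Group/\AnotherSubgroup'\times\Group/\Subgroup$ into $\Group$-orbits indexed by $\AnotherSubgroup'\backslash\Group/\Subgroup$, the $\Subgroup$-orbit decomposition of $\Group/\AnotherSubgroup'$ indexed by $\Subgroup\backslash\Group/\AnotherSubgroup'$, the stabiliser identifications, and the matching of summands via inversion of double cosets are all correct. Two small imprecisions: you need the stabilisers to actually lie in the relevant families (not merely be conjugate into them) for the representables to exist, which does follow from conjugation-closure as you say but should be stated as membership; and the Yoneda fact you use is $\Z[\PlaceA,\Group/\Xi]_\Group\otimes_\Family M\isom M(\Group/\Xi)$, the companion to Lemma~\ref{lem:yoneda-iso} rather than that lemma itself.

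The genuine gap is the one you flag at the end, and it is load-bearing rather than cosmetic. Reducing to representable $N$ via right-exactness and commutation with coproducts only works once you have a \emph{natural transformation} between the two functors of $N$ --- defined globally, or at least defined and natural on the full subcategory of representables --- so that being an isomorphism there propagates to all $N$. Computing both sides independently on each $\Z[\PlaceA,\Group/\AnotherSubgroup']_\Group$ after choosing orbit representatives and observing that the answers are abstractly isomorphic does not by itself produce such a transformation, and compatibility with morphisms $\Group/\AnotherSubgroup'_1\to\Group/\AnotherSubgroup'_2$ (the ``coset and conjugation bookkeeping'' you defer) is precisely where the content of a Mackey-type formula lives. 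A cleaner route that sidesteps the issue is to construct the comparison map once for arbitrary $N$ and $M$, e.g.\ using the coend description of $\otimes_\Family$: a generator $n\otimes f\otimes m$ at $\Group/\Delta$ with $f\colon\Group/\Delta\to\Group/\Subgroup$ determines a conjugate of $\Delta$ inside $\Subgroup$, hence a well-defined class on the $\Subgroup$-side coend. With the map in hand, naturality is automatic and your double-coset computation serves exactly as the verification that it is an isomorphism on representables, which together with right-exactness and coproduct-preservation closes the argument.
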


If $\Delta$ is a $\Group$-set, then we denote by $\Family(\Delta)$ the
set of stabilisers of $\Delta$.  We denote by
$\Z[\PlaceA,\Delta]_{\Group}$ the right $\OrbitCat$-module which sends
$\Group/\Subgroup\in \OrbitCat$ to the free abelian group on the basis
$[\Group/\Subgroup, \Delta]_{\Group}$ which is by definition the set of all
$\Group$-maps from $\Group/\Subgroup\to \Delta$.

\begin{lem}
  \label{lem:Q-tensor-Cp-flat}
  Let $\Delta$ be a $\Group$-set such that $\Family\cap
  \Subgroup\subset \Family$ for every $\Subgroup\in \Family(\Delta)$.
  For every projective right $\OrbitCat$-module $Q$ the
  $\OrbitCat$-module $Q\otimes \Z[\PlaceA, \Delta]_{\Group}$ is flat.
\end{lem}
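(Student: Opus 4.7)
The plan is to reduce to the single-orbit case and then apply Lemma~\ref{lem:MP02-Prop3.5} together with Lemma~\ref{lem:properties-restriction}. Decomposing $\Delta$ into its $\Group$-orbits I would write $\Delta = \bigsqcup_{i \in I} \Group/\Subgroup_i$ with each $\Subgroup_i \in \Family(\Delta)$. Since the $\Group$-maps $\Group/\AnotherSubgroup \to \Delta$ split as a disjoint union of $\Group$-maps into the individual orbits, this gives the natural decomposition
\begin{equation*}
    \Z[\PlaceA, \Delta]_{\Group} = \bigoplus_{i \in I} \Z[\PlaceA, \Group/\Subgroup_i]_{\Group}
\end{equation*}
as right $\OrbitCat$-modules. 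Both tensor products involved commute with direct sums in the relevant variable, and direct sums of flat modules are flat, so it suffices to show that $Q \otimes \Z[\PlaceA, \Group/\Subgroup_i]_{\Group}$ is flat for each fixed $i$.

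Fix such an $i$ and write $\Subgroup = \Subgroup_i$. The hypothesis $\Family \cap \Subgroup \subset \Family$ is exactly what is needed to make sense of the restriction functor $\Res{\Group}{\Subgroup}$. Lemma~\ref{lem:MP02-Prop3.5} then supplies a natural isomorphism
\begin{equation*}
    (Q \otimes \Z[\PlaceA, \Group/\Subgroup]_{\Group}) \otimes_{\Family} M \isom \Res{\Group}{\Subgroup} Q \otimes_{\Family \cap \Subgroup} \Res{\Group}{\Subgroup} M,
\end{equation*}
natural in the left $\OrbitCat$-module $M$. Testing flatness of $Q \otimes \Z[\PlaceA, \Group/\Subgroup]_{\Group}$ therefore reduces to checking that the functor on the right is exact in $M$.

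By Lemma~\ref{lem:properties-restriction}, $\Res{\Group}{\Subgroup}$ is exact and preserves projectivity, so $\Res{\Group}{\Subgroup} Q$ is projective over $\OrbitCat[\Subgroup][\Family \cap \Subgroup]$ and hence flat. Thus the right-hand side above is the composition of two exact functors in $M$, and the proof is complete.

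There is no real obstacle in this argument; it is essentially bookkeeping around two previously established lemmas. The two points that deserve a moment of attention are that the identification of $\Z[\PlaceA, \Delta]_{\Group}$ as a direct sum respects the right $\OrbitCat$-module structure (so that the reduction to orbits is legitimate), and that the condition $\Family \cap \Subgroup \subset \Family$ in the hypothesis, which by assumption holds for every $\Subgroup \in \Family(\Delta)$, is exactly what Lemma~\ref{lem:MP02-Prop3.5} requires in order to be applied orbit by orbit.
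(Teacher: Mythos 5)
Your proof is correct and follows essentially the same route as the paper: reduce to a single orbit $\Delta = \Group/\Subgroup$ (the paper phrases this more tersely as additivity of $\otimes_\Z$, you spell out the orbit decomposition), then apply Lemma~\ref{lem:MP02-Prop3.5} together with Lemma~\ref{lem:properties-restriction}, noting that $\Res{\Group}{\Subgroup}Q$ is projective hence flat and that $\Res{\Group}{\Subgroup}$ is exact.
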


\begin{proof}
  Since tensoring over $\Z$ is an additive functor, it is enough to
  verify the claim in the case that $\Delta = \Group/\Subgroup$ for
  some $\Subgroup$.  Since $Q$ is projective, it follows that the
  $\OrbitCat[\Subgroup][\Family\cap \Subgroup]$-module
  $\Res{\Group}{\Subgroup} Q$ is projective and hence flat.
  Furthermore $\Res{\Group}{\Subgroup}$ is an exact functor.
  Thus the functor, which sends any left $\OrbitCat$-module $M$ to
  $\Res{\Group}{\Subgroup} Q \otimes_{\Family\cap\Subgroup}
  \Res{\Group}{\Subgroup} M$ is exact.  Hence, in the light of the
  natural isomorphism of Lemma~\ref{lem:MP02-Prop3.5} it follows that
  tensoring $Q\otimes \Z[\PlaceA,\Group/\Subgroup]_{\Group}$ over
  $\Family$ is an exact functor, that is $Q\otimes
  \Z[\PlaceA,\Group/\Subgroup]_{\Group}$ is flat.
\end{proof}

For left $\OrbitCat$-module $M$ the left derived functors of
$\PlaceA\otimes_{\Family} M$ are denoted by
$\Tor_{*}^{\Family}(\PlaceA, M)$.  The following is a key ingredient
to our proof and can be found as Theorem~5.4 in
\cite{martinez-perez-11}:


\begin{prop}[Bieri--Eckmann Criterion for Bredon homology]
    \label{prop:Bredon-Bieri-Eckmann}
    Let $N$ be a right $\OrbitCat$-module and let $n\in \N$.  The
    following are equivalent:
    \begin{enumerate}
	\item $N$ is of type $\BredonFP{n}$.
	
	\item Let $(J_\Subgroup)_{\Subgroup \in \Family}$ be a family of
	cardinals.  The natural map
	\begin{equation*}
	    \Tor_k^\Family(N, \prod_{\Subgroup\in\Family}
	    \prod_{J_\Subgroup} \Z[\Group/\Subgroup,
	    \PlaceA]_{\Group}) \to \prod_{\Subgroup\in\Family}
	    \prod_{J_\Subgroup} \Tor_k^\Family(N, \Z[\Group/\Subgroup,
	    \PlaceA]_{\Group})
	\end{equation*}
	is an isomorphism for $k< n$ and an epimorphism for $k=n$.
    \end{enumerate}
\end{prop}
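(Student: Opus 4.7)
The plan is to imitate the classical Bieri--Eckmann proof, using the representable modules $\Z[\Group/\Subgroup,\PlaceA]_\Group$ ($\Subgroup\in\Family$) in place of the ground ring. The key input I would use throughout is Lemma~\ref{lem:yoneda-iso}: tensoring a right $\OrbitCat$-module over $\Family$ with $\Z[\Group/\Subgroup,\PlaceA]_\Group$ is naturally isomorphic to evaluation at $\Group/\Subgroup$, which is an exact functor $\BredonRCat\to\Ab$. In particular, $\Tor_0^\Family(N,\Z[\Group/\Subgroup,\PlaceA]_\Group)=N(\Group/\Subgroup)$ and $\Tor_k^\Family(N,\Z[\Group/\Subgroup,\PlaceA]_\Group)=0$ for $k\geq 1$; moreover evaluation commutes with arbitrary products in $\Ab$.

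For (i)~$\Rightarrow$~(ii), fix a projective resolution $P_\ast\to N$ with $P_k$ finitely generated for $k\leq n$. Each such $P_k$ is a direct summand of a finite direct sum of modules $\Z[\PlaceA,\Group/\Subgroup_i]_\Group$, so the Yoneda identification plus the fact that finite direct sums pass through arbitrary products in $\Ab$ implies that the comparison map
\begin{equation*}
P_k \otimes_\Family \prod_{\Subgroup,J_\Subgroup} \Z[\Group/\Subgroup,\PlaceA]_\Group
\longrightarrow \prod_{\Subgroup,J_\Subgroup}\bigl(P_k \otimes_\Family \Z[\Group/\Subgroup,\PlaceA]_\Group\bigr)
\end{equation*}
is an isomorphism for every $k\leq n$. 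Since products are exact in $\Ab$ they commute with homology, and the homology of the target is by construction $\prod_{\Subgroup,J_\Subgroup}\Tor_k^\Family(N,\Z[\Group/\Subgroup,\PlaceA]_\Group)$. A cycle/boundary comparison in the two chain complexes, which agree in degrees $\leq n$, then yields an isomorphism on $\Tor_k$ for $k<n$ (both cycles and boundaries are computed from modules in degrees $\leq n$) and an epimorphism for $k=n$ (cycles still agree, while boundaries only grow).

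For (ii)~$\Rightarrow$~(i) I would induct on $n$. For the base $n=0$ I would set $J_\Subgroup$ to be the underlying set of $N(\Group/\Subgroup)$ and consider the element $\omega\in \prod_{\Subgroup,j} N(\Group/\Subgroup)$ whose $(\Subgroup,m)$-coordinate is $m$ itself; a preimage $\sum_i n_i\otimes x_i\in N\otimes_\Family\prod_{\Subgroup,J_\Subgroup}\Z[\Group/\Subgroup,\PlaceA]_\Group$, necessarily a finite sum, produces finitely many $n_i\in N(\Group/\Subgroup_i)$ whose images along the Yoneda maps $\Z[\PlaceA,\Group/\Subgroup_i]_\Group\to N$ collectively hit every coordinate of $\omega$, hence generate $N$. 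For the inductive step, assume $N$ is $\BredonFP{n-1}$ and pick a partial free resolution $P_{n-1}\to\cdots\to P_0\to N\to 0$ with all $P_k$ finitely generated; let $K$ be the $(n-1)$st syzygy. Dimension shifting in the long exact $\Tor$-sequences obtained by splitting this resolution into short exact sequences converts the hypothesis on $N$ in degrees up to $n$ into the degree-$0$ form of (ii) for $K$; the base case then shows $K$ is finitely generated, so I can prolong the resolution one step further to obtain $\BredonFP{n}$.

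The main obstacle I expect is the dimension-shifting step in (ii)~$\Rightarrow$~(i): one needs to verify carefully that the long exact $\Tor$-sequence commutes with the specific products $\prod_{\Subgroup,J_\Subgroup}\Z[\Group/\Subgroup,\PlaceA]_\Group$ in the range that matters, using both the commutation already established for finitely generated projectives and the exactness of products in $\Ab$, so that the syzygy $K$ truly inherits the degree-$0$ form of the hypothesis. The remaining manipulations---building the resolution, applying Yoneda, and chasing the chain-level comparison---should be formal.
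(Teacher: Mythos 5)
The paper does not prove this proposition at all: it is imported verbatim as Theorem~5.4 of Mart\'inez-P\'erez and Nucinkis \cite{martinez-perez-11}, so there is no in-paper argument to compare against. That said, your sketch is a correct adaptation of the classical Bieri--Eckmann argument to the Bredon setting, and it is in the same spirit as the cited source. The three ingredients you identify are exactly the right ones: the co-Yoneda identification $\Z[\PlaceA,\Group/\Subgroup']_\Group \otimes_\Family M \cong M(\Group/\Subgroup')$ (the mirror of Lemma~\ref{lem:yoneda-iso}) together with the pointwise computation of products makes tensoring with a finitely generated projective commute with arbitrary products; exactness of products in $\Ab$ then gives (i)$\Rightarrow$(ii) by the cycle/boundary comparison you describe; and for (ii)$\Rightarrow$(i) the diagonal-element trick with $J_\Subgroup = N(\Group/\Subgroup)$ shows $N$ is finitely generated, after which dimension shifting through the syzygies $K_i$ passes the hypothesis down. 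Two small points worth being explicit about if you write this out: first, the version of Yoneda you actually use in (i)$\Rightarrow$(ii) is not Lemma~\ref{lem:yoneda-iso} itself but its dual (representable \emph{right} module against an arbitrary \emph{left} module), which is not stated in the paper though it is equally standard; second, in the dimension-shifting step you need, when $n\ge 2$, that $\Tor_1^\Family(N,\prod\Z[\cdot])$ actually vanishes (not just that the comparison map is onto $0$), and this is exactly what the isomorphism part of hypothesis (ii) in degree~$1$ gives you, so the syzygy inherits the full form of~(ii) in one lower degree as required.
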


Note that $\Tor_{k}^{\Family}(N,\Z[\Group/\Subgroup,\PlaceA]_{\Group})
= 0$ for every $\Subgroup\in \Family$ and $k\geq 1$.  Thus the
requirement in~(ii) that the natural map is an epimorphism is
automatically satisfied for $k\geq 1$.

\medskip

The \emph{Bredon homology} $H_{*}^{\Family}(\Group; M)$ of $\Group$
with coefficients in the left $\OrbitCat$-module $M$ are defined to be
the groups $\Tor_{*}^{\Family}(\BredonZ, M)$.  Analogous to the
classical case~\cite[p.~172]{brown-82}, we
define the \emph{equivariant Bredon homology}
$H_{*}^{\Family}(X, M)$ of a $\Group$-CW-complex with coefficients
in the left $\OrbitCat$-module $M$ as follows, cf.\ \cite{dembegioti-11}.
Let $Q_{*}$ be a
projective resolution of the trivial $\OrbitCat$-module $\Z$ by right
$\OrbitCat$-modules.  Then we have the bigraded complex
\begin{equation*}
    (\BredonC{*}(X)\otimes Q_{**})\otimes_{\Family} M
\end{equation*}
of abelian groups.  We define $H_{*}^{\Family}(X, M)$ to be the
homology of the total complex of this bicomplex.  Note that
$H_{*}^{\Family}(\Group, M) = H_{*}^{\Family}(\pt, M)$.


\section{The case $n=0$}

In the classical case, being of type \FP{0} for a group is an empty
condition.  In the context of Bredon homology this is not true any
more.  Kochloukova, Martínez-Pérez and Nucinkis \cite[Lemma
2.3]{kochloukova-11} have given a characterisation of when a group is
of type \BredonFP{0}:

\begin{prop}
\label{prop:fp0}
    A group $\Group$ is of type \BredonFP{0} if and only if there
    is a finite subset $\Family_0$ of $\Family$ such that every
    $\Subgroup \in \Family$ is subconjugate to some element of
    $\Family_0$, i.e.~there is a $g \in \Group$ and a
    $\AnotherSubgroup \in \Family_0$ such that $\Subgroup^g \le
    \AnotherSubgroup$.
\end{prop}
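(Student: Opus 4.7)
The plan is to unwind the definition of $\BredonFP{0}$ and analyze surjections onto $\BredonZ$ via Yoneda.

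First I would observe that $\Group$ is of type $\BredonFP{0}$ exactly when there exist finitely many $\Subgroup_{1},\dots,\Subgroup_{n}\in\Family$ and a surjective morphism of right $\OrbitCat$-modules
\[
\varphi\colon \bigoplus_{i=1}^{n}\Z[\PlaceA,\Group/\Subgroup_{i}]_{\Group}\epi \BredonZ,
\]
since every finitely generated module is a quotient of a finitely generated free module, and the free objects have this form. By the Yoneda-type argument underlying Lemma~\ref{lem:yoneda-iso}, the morphism $\varphi$ is determined by the tuple of integers $n_{i}\defeq \varphi_{\Group/\Subgroup_{i}}(\id_{\Group/\Subgroup_{i}})\in \BredonZ(\Group/\Subgroup_{i})=\Z$.

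Next I would compute $\varphi$ componentwise. For $\AnotherSubgroup\in\Family$, every $\Group$-map $f\in[\Group/\AnotherSubgroup,\Group/\Subgroup_{i}]_{\Group}$ is sent by $\varphi_{\Group/\AnotherSubgroup}$ to $\BredonZ(f)(n_{i})=n_{i}$, since $\BredonZ$ acts as the identity on every morphism. Hence the image of $\varphi_{\Group/\AnotherSubgroup}$ is the subgroup of $\Z$ generated by
\[
\{\,n_{i}\mid [\Group/\AnotherSubgroup,\Group/\Subgroup_{i}]_{\Group}\neq\emptyset\,\}.
\]
At this point I use the classical fact that $[\Group/\AnotherSubgroup,\Group/\Subgroup_{i}]_{\Group}$ is non-empty if and only if $\AnotherSubgroup$ is subconjugate to $\Subgroup_{i}$.

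For the \emph{only if} direction, surjectivity of $\varphi_{\Group/\AnotherSubgroup}$ forces the above subgroup to equal $\Z$, so in particular some $n_{i}$ with $\AnotherSubgroup$ subconjugate to $\Subgroup_{i}$ must be non-zero. Setting $\Family_{0}\defeq\{\Subgroup_{i}\mid n_{i}\neq 0\}$ yields a finite subset of $\Family$ with the required property. Conversely, given such a finite $\Family_{0}=\{\Subgroup_{1},\dots,\Subgroup_{n}\}$, I would define $\varphi$ by choosing $n_{i}=1$ for all $i$; by the same calculation $\varphi_{\Group/\AnotherSubgroup}$ is surjective for every $\AnotherSubgroup\in\Family$, hence $\varphi$ itself is surjective (since exactness in $\BredonRCat$ is checked componentwise), which exhibits $\BredonZ$ as finitely generated.

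There is essentially no obstacle here: the whole proof is a careful unpacking of definitions plus the dictionary between $\Group$-maps $\Group/\AnotherSubgroup\to\Group/\Subgroup$ and subconjugacy. The only point that deserves mild care is making sure the action of $\BredonZ$ on morphisms is trivial, so that the image of $\varphi_{\Group/\AnotherSubgroup}$ really is generated by the bare $n_{i}$'s rather than some twisted versions; this is immediate from the definition of $\BredonZ$.
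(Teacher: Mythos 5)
Your proof is correct. Note, however, that the paper does not prove Proposition~\ref{prop:fp0} at all --- it simply cites it as Lemma~2.3 of Kochloukova, Mart\'inez-P\'erez and Nucinkis~\cite{kochloukova-11} --- so there is no in-paper argument to compare against. Your argument is the natural one: reduce finite generation of $\BredonZ$ to the existence of a surjection from a finite sum $\bigoplus_i \Z[\PlaceA,\Group/\Subgroup_i]_\Group$, use the Yoneda lemma to identify such a morphism with the tuple $(n_i)$, and observe that $\BredonZ$ being a constant functor makes the image of each component map $\varphi_{\Group/\AnotherSubgroup}$ exactly the subgroup of $\Z$ generated by the $n_i$ for which $[\Group/\AnotherSubgroup,\Group/\Subgroup_i]_\Group\neq\emptyset$, which one translates via the standard dictionary into subconjugacy. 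The only point worth stating explicitly (which you have essentially done) is that surjectivity at $\Group/\AnotherSubgroup$ forces the relevant subset of the $n_i$ to have $\gcd$ equal to $1$, hence in particular to contain a non-zero element, which is what allows you to drop the $\Subgroup_i$ with $n_i=0$ when forming $\Family_0$. The converse direction, taking all $n_i=1$, is clean. This is a complete and self-contained proof of the cited result.
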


Using this result, the case $n=0$ of the Main Theorem is readily
verified:

\begin{proof}[Proof of the Main Theorem for $n=0$]
    First assume that the directed system $(\RHoml{-1}(X_\alpha))_{\alpha \in I}$
    is essentially trivial.
    Then there is a $\beta \in I$ such that $X_\beta$ is
    $\Family$-acyclic up to dimension $-1$, i.e.\ $X^\Subgroup$ is
    non-empty for every $\Subgroup \in \Family$.  By assumption
    $\Skeleton{X_\beta}{0}$ is finite modulo $\Group$.  The stabilizer
    $\Group_x$ of every $x \in \Skeleton{X_\beta}{0}$ is of type
    \BredonFP[(\Family \intersect \Group_x)]{0}.  Hence there is a finite
    subset $\Family_{x,0}$ of $\Family \intersect \Group_x$ such that
    every $\Subgroup \in \Family \intersect \Group_x$ is subconjugate to
    some element of $\Family_{x,0}$.  Let $\Sigma_0$ be a set of
    representatives for $\Skeleton{X_\beta}{0}$ modulo $\Group$ an let
    \[
    \Family_0 = \Union_{x \in \Sigma_0} \Family_{x,0}
    \]
    which is a finite subset of $\Family$.  If $\Subgroup \in \Family$
    is arbitrary, then $X_\beta^\Subgroup$ is non-empty.  Hence
    $\Subgroup$ fixes some point $x$ of $\Skeleton{X_\beta}{0}$ and
    therefore is subconjugate to some element of $\Family_{x,0}$.

    Conversely assume that $\Group$ is of type \BredonFP{0}.  Let
    $\Family_0 \subseteq \Family$ be finite such that every element of
    $\Family$ is subconjugate to some element of $\Family_0$.  For
    arbitrary $\alpha \in I$ let $\beta \ge \alpha$ be such that
    $X_\beta$ contains a fixed point of each element of $\Family_0$.
    Let $\Subgroup \in \Family$ be arbitrary and $\Subgroup^g \le
    \AnotherSubgroup \in \Family_0$.  If $x \in X_\beta$ is a fixed
    point of $\AnotherSubgroup$, then $g.x \in X_\beta$ is a fixed
    point of $\Subgroup$.
\end{proof}


\section{Proof of the Main Theorem}

The following proposition is contained in \cite{dembegioti-11} for the case $n = \infty$ and the proof is essentially the same. We reproduce it for convenience.

\begin{prop}
    \label{prop:aux-1} 
    Let $X$ be a $\Group$-CW-complex which is $\Family$-acyclic up to
    dimension $n-1$ and let $M$ be a left $\OrbitCat$-module.  Then
    the natural isomorphism
    \begin{equation*}
        \BredonHoml{k}(X,M) \to \BredonHoml{k}(\Group,M)
    \end{equation*}
    (induced by the projection of $X$ to a point) is an isomorphism for $k < n$.
\end{prop}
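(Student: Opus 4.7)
The plan is to compare the bicomplex computing $\BredonHoml{*}(X, M)$ with that computing $\BredonHoml{*}(\Group, M)$ by adjoining an extra column built from the augmentation $\BredonC{0}(X) \to \BredonZ$. Concretely, fix a projective resolution $Q_*$ of $\BredonZ$ by right $\OrbitCat$-modules and form the augmented bicomplex $\tilde D_{p,q}$ given by $\tilde D_{p,q} = \BredonC{p}(X) \otimes Q_q \otimes_\Family M$ for $p \geq 0$ and $\tilde D_{-1,q} = Q_q \otimes_\Family M$, with the horizontal differential from column $0$ to column $-1$ induced by the augmentation. Write $A_*$ for its total complex.

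The heart of the argument is to show $H_k(A_*) = 0$ for $k \leq n-1$, which I would establish via the spectral sequence obtained by filtering by rows. For fixed $q$ the row-$q$ complex reads
\[
    \dots \to \BredonC{1}(X) \otimes Q_q \otimes_\Family M \to \BredonC{0}(X) \otimes Q_q \otimes_\Family M \to Q_q \otimes_\Family M \to 0,
\]
and I would verify it is acyclic in positions $-1, 0, \dots, n-1$ in two stages. First, the augmented Bredon chain complex $\BredonC{*}(X) \to \BredonZ$ evaluated at $\Group/\Subgroup$ for $\Subgroup \in \Family$ is the augmented cellular chain complex of $X^\Subgroup$, which is acyclic in those positions by the $\Family$-acyclicity hypothesis; since this pointwise complex consists of free abelian groups, tensoring with $Q_q(\Group/\Subgroup)$ preserves the acyclicity, so $\BredonC{*}(X) \otimes Q_q$ together with its augmentation by $Q_q$ at position $-1$ is an acyclic complex of right $\OrbitCat$-modules in positions $\leq n-1$. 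Second, each term of this complex is flat over $\Family$: $Q_q$ is projective by construction, and $\BredonC{p}(X) \otimes Q_q$ is flat by Lemma~\ref{lem:Q-tensor-Cp-flat}. A standard dimension-shifting argument then shows that the cycles of this complex are themselves flat throughout the acyclic range, and this flatness is what ensures that $\otimes_\Family M$ preserves acyclicity in positions $\leq n-1$. Hence the row $E^1$-page of $\tilde D$ vanishes for $-1 \leq p \leq n-1$, and since any $(p,q)$ of total degree $k \leq n-1$ satisfies $-1 \leq p \leq n-1$, we conclude $H_k(A_*) = 0$ for $k \leq n-1$.

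To finish, I observe that the $p = -1$ column of $\tilde D$ is a subcomplex of $A_*$ (no horizontal differential leaves it) with $k$-th term $Q_{k+1} \otimes_\Family M$, so its $k$-th homology is $\Tor_{k+1}^\Family(\BredonZ, M) = \BredonHoml{k+1}(\Group, M)$, while the quotient complex is the total complex of the non-augmented bicomplex, whose homology is $\BredonHoml{*}(X, M)$. The long exact sequence associated to this short exact sequence of complexes, combined with the vanishing $H_k(A_*) = 0$ for $k \leq n-1$, forces the connecting homomorphism to yield isomorphisms $\BredonHoml{k}(X, M) \cong \BredonHoml{k}(\Group, M)$ for $k \leq n-1$; a diagram chase identifies the connecting homomorphism with the map induced by the projection $X \to \pt$. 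I expect the flatness-of-cycles step in the middle paragraph to be the main obstacle, as this is the point where the proof genuinely uses the Bredon-specific content of Lemma~\ref{lem:Q-tensor-Cp-flat} rather than reducing to a formal spectral-sequence manipulation.
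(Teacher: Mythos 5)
Your proof is correct and rests on the same pillars as the paper's: the partial flat resolution $C_n \otimes Q_q \to \dots \to C_0 \otimes Q_q \to Q_q \to 0$ (flatness of the terms from Lemma~\ref{lem:Q-tensor-Cp-flat}, exactness verified pointwise since $Q_q(\Group/\Subgroup)$ is free abelian), and a first-quadrant spectral-sequence argument filtering the double complex by $Q$-degree. The difference lies only in the bookkeeping at the end. The paper works with the unaugmented bicomplex, identifies $E^1_{pq} = \Tor^\Family_q(Q_p, M)$ for $q < n$ (which vanishes for $0 < q < n$ because $Q_p$ is projective), reads off $E^2_{p,0} = \BredonHoml{p}(\Group,M)$, and invokes stability of the region $p + q < n$. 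You instead adjoin the augmentation column, show the augmented total complex is acyclic in degrees $\le n-1$, and extract the isomorphism from the long exact sequence of the short exact sequence (augmentation column) $\to$ (augmented total) $\to$ (unaugmented total). The two routes are equivalent; the augmented version avoids an explicit $E^2$-page computation and naturally produces the comparison map, at the cost of the index-shift in the subcomplex. Your explicit dimension-shifting to make the cycles flat is merely an unpacking of the standard fact, cited implicitly by the paper, that a length-$n$ partial flat resolution computes $\Tor_q$ for $q < n$; so the ``genuine obstacle'' you flag is actually the formal part, while the Bredon-specific content is entirely in Lemma~\ref{lem:Q-tensor-Cp-flat} itself. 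Finally, note that (exactly as in the paper's proof) invoking Lemma~\ref{lem:Q-tensor-Cp-flat} for $C_p$ tacitly uses that $\Family \cap \Group_\sigma \subset \Family$ for cell stabilizers $\Group_\sigma$; this hypothesis does not appear in the statement of the proposition but is supplied by condition~(ii) of $\Family$-$n$-goodness when the proposition is applied in the Main Theorem.
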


\begin{proof}
Let $C_*$ be the chain complex of Bredon modules $X$ and let $Q_*$ be a projective resolution of $\BredonZ$. By definition there is a spectral sequence
\[
E^1_{pq} = H_q((C_* \otimes Q_p) \otimes_\Family M) \Rightarrow \BredonHoml{k}(X,M) \text{ .}
\]

We claim that
\begin{equation}
\label{eq:tensor_exact_sequence}
C_n \otimes Q_p \to C_{n-1} \otimes Q_p \to \ldots \to C_0 \otimes Q_p \to Q_p \to 0
\end{equation}
is a partial flat resolution of Bredon modules. By acyclicity of $X$ up to dimension $n-1$ the sequence
\[
C_n \to C_{n-1} \to \ldots \to C_0 \to \BredonZ \to 0
\]
is exact. To see that \eqref{eq:tensor_exact_sequence} is exact we have to see that it is exact evaluated at every orbit $\Group/\Subgroup$. This is true because $Q_p(\Group/\Subgroup)$ is free abelian. Flatness follows from Lemma~\ref{lem:Q-tensor-Cp-flat} because every $C_p$ is of the form $\Z[\PlaceA,\Delta]_\Group$.

It follows from \eqref{eq:tensor_exact_sequence} that $E^1_{pq} =
\Tor^\Family_q(Q_p,M)$ for $q < n$. Since $Q_p$ is projective we get
\[
\Tor^\Family_q(Q_p,M) =
\left\{
\begin{array}{ll}
Q_p \otimes_\Family M&q=0\\
0 &0 < q < n \text{ .}
\end{array}
\right.
\]

But $Q_* \otimes_\Family M$ can be used to compute $\BredonHoml{*}(\Group,M)$, therefore
\[
E^2_{pq} = \left\{
\begin{array}{ll}
\BredonHoml{p}(\Group,M)& q = 0\\
0 & 0 < q < n \text{ .}
\end{array}
\right.
\]
Since the triangle $p+q < n$ remains stable, this closes the proof.
\end{proof}

\begin{prop}
    \label{prop:aux-2} 
    Let $X$ be a $\Group$-CW-complex with cocompact $n$-skeleton.
    Assume that every $p$-cell $\sigma$ of $X$, $p\leq n$, the
    following two condition hold: $\Family \cap \Group_{\sigma}
    \subset \Family$, and $\Group_{\sigma}$ is of type $(\Family
    \intersect \Group)_\sigma\text{-}\FP{n-p}$.  Then for $k \le
    n$ and every family of cardinals $(J_\Subgroup)_{\Subgroup \in
    \Family}$ there exists an isomorphism
    \begin{equation*}
	\BredonHoml{k}(X,\prod_{\Subgroup \in
	\Family}\prod_{J_\Subgroup}\Z[\Group/\Subgroup,\PlaceA]_\Group)
	\to \prod_{\Subgroup \in
	\Family}\prod_{J_\Subgroup}\Homl{k}(X)(\Group/\Subgroup)
    \end{equation*}
    that is natural in $X$.
\end{prop}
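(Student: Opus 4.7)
I would use the hyperhomology spectral sequence obtained by filtering the double complex $(\BredonC{*}(X) \otimes Q_{**}) \otimes_\Family M$ by $\BredonC{}$-degree:
\[
E^1_{pq} = H_q(\BredonC{p}(X) \otimes Q_* \otimes_\Family M) \Rightarrow \BredonHoml{p+q}(X, M).
\]
For $p \le n$, the hypothesis $\Family \cap \Group_\sigma \subset \Family$ together with Lemma~\ref{lem:Q-tensor-Cp-flat} shows that $\BredonC{p}(X) \otimes Q_*$ is a flat resolution of $\BredonC{p}(X) = \bigoplus_\sigma \Z[\PlaceA, \Group/\Group_\sigma]_\Group$, so $E^1_{pq} \cong \Tor_q^\Family(\BredonC{p}(X), M)$. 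By Lemma~\ref{lem:properties-induction}, each summand $\Z[\PlaceA, \Group/\Group_\sigma]_\Group = \Ind{\Group_\sigma}{\Group} \BredonZ$ inherits type $\BredonFP{n-p}$ from the hypothesis on $\Group_\sigma$; cocompactness of the $n$-skeleton makes $\BredonC{p}(X)$ a finite direct sum of such modules, hence itself of type $\BredonFP{n-p}$.

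Next I would invoke Proposition~\ref{prop:Bredon-Bieri-Eckmann} applied to $N = \BredonC{p}(X)$ and the family $(J_\Subgroup)$: the natural map
\[
\Tor^\Family_q(\BredonC{p}(X), M) \to \prod_{\Subgroup \in \Family} \prod_{J_\Subgroup} \Tor^\Family_q(\BredonC{p}(X), \Z[\Group/\Subgroup, \PlaceA]_\Group)
\]
is an isomorphism for $q < n - p$ and an epimorphism for $q = n - p$. Since $\Z[\Group/\Subgroup, \PlaceA]_\Group$ is projective the right-hand side vanishes for $q \ge 1$, forcing $E^1_{pq} = 0$ for $1 \le q \le n - p$ when $p \le n$. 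On any diagonal $p + q = k \le n$, therefore, only $E^1_{k, 0} = \BredonC{k}(X) \otimes_\Family M$ survives; the relevant $d_r$-differentials vanish for $r \ge 2$ (outgoing targets are in the vanishing range $q \ge 1$, $p \le n$, and incoming sources have $q < 0$), so $\BredonHoml{k}(X, M) \cong E^2_{k, 0} = H_k(\BredonC{*}(X) \otimes_\Family M)$ for $k \le n$. The analogous spectral sequence for single coefficients $\Z[\Group/\Subgroup, \PlaceA]_\Group$ degenerates at $E^2$, and taking products (which are exact) identifies $\prod_\Subgroup \prod_{J_\Subgroup} \Homl{k}(X)(\Group/\Subgroup) \cong H_k(\prod \prod \BredonC{*}(X^\Subgroup))$.

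The claimed isomorphism is then the one induced on $H_k$ by the natural chain map $\BredonC{*}(X) \otimes_\Family M \to \prod \prod \BredonC{*}(X^\Subgroup)$, whose components in each degree $p$ are the Bieri--Eckmann comparison maps for $\BredonC{p}(X)$; by the analysis above these are isomorphisms in degrees $p \le n - 1$ and epimorphisms in degree $p = n$. A 5-lemma argument yields the isomorphism on $H_k$ for $k \le n - 1$, and naturality in $X$ is immediate from the naturality of the Bieri--Eckmann map.

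The main obstacle I foresee is the case $k = n$: the chain map in degree $n + 1$ is not controlled by the hypotheses (stabilizers of $(n+1)$-cells play no role in the assumptions), and the comparison map in degree $p = n$ is only an epimorphism rather than an isomorphism. To close the gap I would argue directly that the induced map on $H_n$ is injective by analyzing the image of $\partial_{n+1}$ in $\BredonC{n}(X) \otimes_\Family M$ and showing, using the $\BredonFP{0}$ hypothesis on $n$-cells and exactness of products in $\Ab$, that any cycle in the kernel of the degree-$n$ comparison map is in fact a boundary.
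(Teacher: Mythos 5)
Your approach matches the paper's essentially line for line: the same hyperhomology spectral sequence with $E^1_{pq} = H_q((C_p \otimes Q_*) \otimes_\Family M) \cong \Tor^\Family_q(C_p,M)$ via Lemma~\ref{lem:Q-tensor-Cp-flat}, the same use of Lemma~\ref{lem:properties-induction} and cocompactness to show $C_p$ has type $\FP{n-p}$, the same application of Proposition~\ref{prop:Bredon-Bieri-Eckmann}, and the final identification via Lemma~\ref{lem:yoneda-iso}; the paper reads the isomorphism directly off the $E^2$-page while you phrase it as a chain map plus the 5-lemma, but these are the same computation in different clothing. The obstruction you flag at $k=n$ is a real subtlety, but it is one the paper shares rather than one your proposal introduces: the Bieri--Eckmann criterion only gives an epimorphism (onto $0$) in Tor-degree $q = n-p$, so $E^1_{p,n-p}$ need not vanish, and the paper's own proof accordingly closes with ``for $k<n$'' even though the proposition asserts $k \le n$; since the Main Theorem only invokes the proposition with $k<n$, the discrepancy is harmless, and your instinct not to force the $k=n$ case was correct.
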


\begin{proof}
    
    As in the previous proof let $C_{*} = \BredonC{*}(X)$ and let
    $Q_{*}\epi \BredonZ$ be a projective resolution of the trivial
    $\OrbitCat$-module.
    
    There exists a spectral sequence converging to 
    $\BredonHoml{*}(X,M)$ whose $E^{1}$-sheet is given by
    \begin{equation*}
        E^{1}_{pq} = H_{q}((C_{p}\otimes Q_{*})\otimes_{\Family} M).
    \end{equation*}
    Since $C_{p}(\Group/\Subgroup)$ is a free abelian group for every $\Subgroup\in 
    \Family$ it follows that $C_{p}\otimes Q_{*}$ is a resolution of 
    $C_{p}\otimes \Z = C_{p}$. Moreover this resolution is flat by 
    Lemma~\ref{lem:Q-tensor-Cp-flat} and thus there exist
    isomorphisms
    \begin{equation*}
        H_{q}((C_{p}\otimes Q_{*})\otimes_{\Family} M) \isom 
	\Tor^{\Family}_{q}(C_{p},M)
    \end{equation*}
    which are natural in $X$ and $M$.

    Next we show that $C_p$ is of type \FP{n-p} for $p \le n$.
    Note that the last statement
    of  Lemma~\ref{lem:properties-induction} implies
    \begin{equation*}
	C_p \isom \coprod_{\sigma \in \Sigma_p}
	\Ind{\Group_\sigma}{\Group} \BredonZ
    \end{equation*}
    where $\Sigma_p$ is a set of representatives for the $p$-cells of
    $X$ modulo $\Group$.  Note also, that $\Sigma_p$ is finite.  By
    assumption $\BredonZ$ is of type \FP{n-p} as an
    $\OrbitCat[\Group_{\sigma}][\Family \intersect
    \Group_\sigma]$-module for every $p$-cell $\sigma\in \Sigma_{p}$.
    The claim now follows from Lemma~\ref{lem:properties-induction}.

    Now take $M$ to be $\prod_{\Subgroup \in
    \Family}\prod_{J_\Subgroup}\Z[\Group/\Subgroup,\PlaceA]_\Group$
    and consider the spectral sequence above.  Since $C_p$ is of type
    \FP{n-p} the Bieri--Eckmann Criterion,
    Proposition~\ref{prop:Bredon-Bieri-Eckmann}, implies that
    \begin{equation*}
	E^1_{pq} = \prod_{\Subgroup \in \Family}\prod_{J_\Subgroup}
	\Tor_q^{\Family}(C_p,\Z[\Group/\Subgroup,\PlaceA]_\Group)
    \end{equation*}
    which is $0$ for $q > 0$.  The entry $E^1_{p0}$ is natural
    isomorphic to $\prod_{\Subgroup \in \Family}\prod_{J_\Subgroup}
    C_p \otimes_\Family \Z[\Group/\Subgroup,\PlaceA]_\Group$ and the
    differentials are induced by the differentials of the chain
    complex~$C_{*}$.  Therefore, one can read off the second page
    of the spectral sequence that
    \begin{equation*}
	\BredonHoml{k}(X,\prod_{\Subgroup \in
	\Family}\prod_{J_\Subgroup}\Z[\Group/\Subgroup,\PlaceA]_\Group)
	\isom \prod_{\Subgroup \in \Family}\prod_{J_\Subgroup} H_k(C_*
	\otimes_\Family \Z[\Group/\Subgroup,\PlaceA]_\Group) \isom
	\prod_{\Subgroup \in \Family}\prod_{J_\Subgroup}
	H_k(C_*(\Group/\Subgroup))
    \end{equation*}
    for $k < n$ where the last isomorphism is the isomorphism from
    Lemma~\ref{lem:yoneda-iso}.  But
    $H_k(C_*(\Group/\Subgroup))$ is just
    $\Homl{k}(X)(\Group/\Subgroup)$ and this concludes the proof.
\end{proof}

\begin{lem}
  \label{lem:aux-3} 
  Let $X$ be a $\Group$-CW-complex and let $(X_\alpha)_{\alpha\in I}$
  be a filtration of $X$ by $\Group$-invariant subcomplexes. Then the
  inclusions $X_\alpha \hookrightarrow X$ induce an isomorphism
  \begin{equation*}
    \varinjlim_\alpha \BredonHoml{*}(X_\alpha, M) \to \BredonHoml{*}(X,M)
  \end{equation*}
  for all left $\OrbitCat$-modules $M$.
\end{lem}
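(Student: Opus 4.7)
The plan is to trace the defining double complex through the filtered colimit and use that filtered colimits commute with every operation involved. Fix a projective resolution $Q_{*}\epi \BredonZ$ of right $\OrbitCat$-modules, so that by definition
\begin{equation*}
  \BredonHoml{*}(Y,M) = H_{*}\bigl(\Tot((\BredonC{*}(Y)\otimes Q_{**})\otimes_{\Family}M)\bigr)
\end{equation*}
for any $\Group$-CW-complex $Y$.

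First I would show the cellular-chain identification $\BredonC{*}(X)\isom\varinjlim_{\alpha}\BredonC{*}(X_{\alpha})$ in $\BredonRCat$. Indeed, since $(X_{\alpha})_{\alpha\in I}$ is a filtration of $X$ by $\Group$-invariant subcomplexes, each $\Group$-orbit of $p$-cells of $X$ is already contained in some $X_{\alpha}$, so evaluated at any $\Group/\Subgroup$ both sides are the same free abelian group generated by the fixed $p$-cells; the identification is compatible with the boundary maps because boundaries of cells in $X_{\alpha}$ are computed the same in $X_{\alpha}$ and in $X$. Since colimits of Bredon modules are computed componentwise in $\Ab$, this is indeed an isomorphism of chain complexes of Bredon modules.

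Next I would apply, in turn, $(\PlaceA)\otimes Q_{p}$ for each $p$, then $(\PlaceA)\otimes_{\Family}M$, then $\Tot$, then $H_{k}$. Each of these commutes with filtered colimits: tensor products (over $\Z$ componentwise, and the categorical tensor $\otimes_{\Family}$) are left adjoints in each variable and thus preserve all colimits; the total complex of a double complex supported in the first quadrant is a direct sum in each degree and direct sums commute with colimits; and filtered colimits of abelian groups are exact, so they commute with taking the homology of a chain complex. Combining these,
\begin{equation*}
  \varinjlim_{\alpha}\BredonHoml{k}(X_{\alpha},M)
  \isom H_{k}\Bigl(\Tot\bigl(\varinjlim_{\alpha}(\BredonC{*}(X_{\alpha})\otimes Q_{**})\otimes_{\Family}M\bigr)\Bigr)
  \isom \BredonHoml{k}(X,M),
\end{equation*}
and naturality in the inclusions $X_{\alpha}\into X$ shows that this isomorphism is induced by the maps in the statement.

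The only step that needs care is the interchange of $\otimes_{\Family}$ with the filtered colimit, because $\otimes_{\Family}$ is a coend rather than an ordinary tensor over a ring; but since $\PlaceA\otimes_{\Family}M$ is a left adjoint (its right adjoint sends an abelian group $A$ to the Bredon module $\Hom_{\Z}(M(\PlaceA),A)$, as recorded in the references to \cite{luck-89,mislin-03} cited earlier), it preserves all colimits and in particular filtered ones. With this in hand the argument above goes through and the proof is complete.
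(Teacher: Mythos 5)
Your argument is correct and is essentially the paper's own proof spelled out in full: the paper's one-line justification ("$\varinjlim_\alpha$ is a filtered colimit and in particular exact") is exactly the combination of facts you unwind — that cellular chains, both tensor products, the total complex, and homology all commute with the filtered colimit. Your extra care about $\otimes_\Family$ being a left adjoint (rather than an ordinary module tensor) is a worthwhile clarification but does not change the route.
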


\begin{proof}
  This is due to the fact that $\varinjlim_{\alpha}$ is a
  filtered colimit and in particular exact.
\end{proof}

\begin{proof}[Proof of the Main Theorem]
    Since we have already covered the case $n=0$ we may and do assume
    that $n \ge 1$.  By the Bieri--Eckmann Criterion $\Group$ is of
    type \BredonFP{n} if and only if for every family
    $(J_\Subgroup)_{\Subgroup \in \Family}$ the natural map
    \begin{equation*}
	\varphi\: \BredonHoml{k}(\Group, \prod_{\Subgroup \in
	\Family}\prod_{J_\Subgroup}
	\Z[\Group/\Subgroup,\PlaceA]_\Group) \to \prod_{\Subgroup \in
	\Family} \prod_{J_\Subgroup} \BredonHoml{k}(\Group,
	\Z[\Group/\Subgroup,\PlaceA]_\Group)
	\tag{$*$}
	\label{eq:main}
    \end{equation*}
    is an isomorphism for $0 \le k < n$ and an epimorphism for $k =
    n$.  Since the right hand side is $0$ for $k > 0$ and since we are
    assuming that $n \ge 1$, the statement about the epimorphism is
    trivially satisfied.

    \smallskip

    Since the codomain of $\varphi$ is trivial for $k\geq
    1$ we first show that also the domain of $\varphi$
    is trivial for $0 < k < n$ (which is a special case of the
proof for $k=0$ below). We have the isomorphisms
    \begin{equation*}
	\BredonHoml{k}(\Group,\prod_{\Subgroup \in
	\Family}\prod_{J_\Subgroup}
	\Z[\Group/\Subgroup,\PlaceA]_\Group) \isom
	\BredonHoml{k}(X,\prod_{\Subgroup \in
	\Family}\prod_{J_\Subgroup}
	\Z[\Group/\Subgroup,\PlaceA]_\Group)
    \end{equation*}
    from Proposition~\ref{prop:aux-1}
    \begin{equation*}
	\BredonHoml{k}(X,\prod_{\Subgroup \in
	\Family}\prod_{J_\Subgroup}
	\Z[\Group/\Subgroup,\PlaceA]_\Group) \isom \varinjlim_{\alpha}
	\BredonHoml{k}(X_\alpha,\prod_{\Subgroup \in
	\Family}\prod_{J_\Subgroup}
	\Z[\Group/\Subgroup,\PlaceA]_\Group)
    \end{equation*}
    from Lemma~\ref{lem:aux-3} and
    \begin{equation*}
	\varinjlim_{\alpha} \BredonHoml{k}(X_\alpha,\prod_{\Subgroup
	\in \Family}\prod_{J_\Subgroup}
	\Z[\Group/\Subgroup,\PlaceA]_\Group) \isom \varinjlim_{\alpha}
	\prod_{\Subgroup \in \Family}\prod_{J_\Subgroup}
	\Homl{k}(X_\alpha)(\Group/\Subgroup)
    \end{equation*}
    from
    Proposition~\ref{prop:aux-2}.
    By Observation~\ref{obs:essentially_trivial} this is trivial if
    and only if the system $(\Homl{k}(X_\alpha))_{\alpha \in I}$ of
    $\OrbitCat$-modules is essentially trivial.

    \smallskip

    \pagebreak[3]

    For \nopagebreak the remaining case $k=0$ consider the following commuting
    diagram (where we dropped the index sets for readability):
    \begin{diagram}
	\BredonHoml{0}(\Group,\prod
	\Z[\Group/\Subgroup,\PlaceA]_\Group) & \rTo^{\varphi} &
	\prod
	\BredonHoml{0}(\Group,\Z[\Group/\Subgroup,\PlaceA]_\Group)
        \\
	\uTo & & \uTo
        \\
	\BredonHoml{0}(X,\prod \Z[\Group/\Subgroup,\PlaceA]_\Group) &
	\rTo & \prod
	\BredonHoml{0}(\text{pt.},\Z[\Group/\Subgroup,\PlaceA]_\Group)
        \\
	\uTo & & \uTo
        \\
	\varinjlim_\alpha \BredonHoml{0}(X_\alpha,\prod
	\Z[\Group/\Subgroup,\PlaceA]_\Group) & \rTo &
        \varinjlim_\alpha \prod \BredonHoml{0}(\text{pt.},
        \Z[\Group/\Subgroup,\PlaceA]_\Group)
        \\
	\dTo & & \dTo
        \\
	\varinjlim_\alpha \prod \Homl{0}(X_\alpha)(\Group/\Subgroup) &
	\rTo^\psi & \varinjlim_\alpha \prod
	\Homl{0}(\text{pt.})(\Group/\Subgroup)
    \end{diagram}
    The vertical arrows of the top square are isomorphisms by
    Proposition~\ref{prop:aux-1}. The vertical arrows of the middle
    square are induced by the inclusions $X_\alpha \hookrightarrow X$
    and the indentity on the one point space respectively; it follows from
    Lemma~\ref{lem:aux-3} that they are
    isomorphisms. Finally, the vertical arrows of the bottom square
    are the isomorphisms from Proposition~\ref{prop:aux-2}.

    Since all the vertical arrows in the diagram are
    isomorphisms it follows that $\varphi$ is an
    isomorphism if and only if $\psi$ is an
    isomorphisms. But $\psi$ fits into the
    short exact sequence
    \begin{equation*}
	0 \to \varinjlim_\alpha \prod_{\Subgroup \in
	\Family}\prod_{J_\Subgroup}
	\RHoml{0}(X_\alpha)(\Group/\Subgroup) \to \varinjlim_\alpha
	\prod_{\Subgroup \in \Family}\prod_{J_\Subgroup}
	\Homl{0}(X_\alpha)(\Group/\Subgroup)
	\stackrel{\psi}{\to} \varinjlim_\alpha \prod_{\Subgroup
	\in \Family}\prod_{J_\Subgroup} \Z \to 0
    \end{equation*}
    and it follows Observation~\ref{obs:essentially_trivial}
    that $\psi$ (and therefore $\varphi$) is an
    isomorphism if and only if the system
    $(\RHoml{0}(X_\alpha))_{\alpha \in I}$ of $\OrbitCat$-modules is
    essentially trivial.
\end{proof}



\providecommand{\bysame}{\leavevmode\hbox to3em{\hrulefill}\thinspace}
\providecommand{\MR}{\relax\ifhmode\unskip\space\fi MR }
\providecommand{\MRhref}[2]{%
  \href{http://www.ams.org/mathscinet-getitem?mr=#1}{#2}
}
\providecommand{\href}[2]{#2}

\end{document}